\newcommand{\C}{\mathbb C}
\newcommand{\R}{\mathbb R}
\newcommand{\N}{\mathbb N}
\newcommand{\eps}{\varepsilon}
\newcommand{\ident}{{\mathchoice {\rm 1\mskip-4mu l} {\rm 1\mskip-4mu l}
{\rm 1\mskip-4.5mu l} {\rm 1\mskip-5mu l}}}
\newcommand{\undertilde}[1]{\underset{\widetilde{}}{#1}}
\newtheorem{teo}{Theorem}[section]
\newtheorem{cor}[teo]{Corollary}
\newtheorem{rem}[teo]{Remark}
\newtheorem{pro}[teo]{Proposition}
\newtheorem{teo2}{Theorem}
\newtheorem{exm}[teo2]{Example}
\begin{document}

\title
{Characterization of critical values of \\ branching random walks on
weighted graphs through \\ infinite-type branching processes}

\author[D.~Bertacchi]{Daniela Bertacchi}
\address{D.~Bertacchi,  Universit\`a di Milano--Bicocca
Dipartimento di Matematica e Applicazioni,
Via Cozzi 53, 20125 Milano, Italy
}
\email{daniela.bertacchi\@@unimib.it}

\author[F.~Zucca]{Fabio Zucca}
\address{F.~Zucca, Dipartimento di Matematica,
Politecnico di Milano,
Piazza Leonardo da Vinci 32, 20133 Milano, Italy.}
\email{fabio.zucca\@@polimi.it}

\date{}

\begin{abstract}
We study the branching random walk on weighted graphs; site-breeding and
edge-breeding branching random walks on graphs are seen as particular cases.
Two kinds of survival can be identified: a weak survival (with positive probability there is at least one
particle alive somewhere at any time) and a strong survival (with positive probability the colony survives
by returning infinitely often to a fixed site). The behavior of the process
depends on the value of a certain parameter which controls the birth rates;
the threshold between survival and (almost sure) extinction is called critical value.
We describe the strong critical value in terms of a geometrical parameter of
the graph. We characterize the weak critical value and relate it to another
geometrical parameter. We prove that, at the strong critical value, the
process dies out locally almost surely; while, at the weak critical
value, global survival and global extinction are both possible.
\end{abstract}

\maketitle

\noindent {\bf Keywords}: branching random walk, branching process, critical value, critical behavior, weighted graph.

\noindent {\bf AMS subject classification}: 60K35.

\baselineskip .6 cm

\section{Introduction}\label{sec:intro}
\setcounter{equation}{0}

We consider
the branching random walk (briefly BRW) as a continuous-time process
where particles live on an at most countable set $X$ (the set of sites).
Each particle lives on a site and, independently of the others, has a random lifespan; during its life it breeds at random intervals and sends its
offspring to randomly chosen sites.
More precisely each particle has an exponentially distributed lifespan
with mean 1. To a particle living at site $x$, for any $y\in X$, there corresponds a Poisson clock of rate $\lambda k_{xy}$: when the clock
rings, a new particle is born in $y$ (where $(k_{xy})_{x,y\in X}$
is a matrix with nonnegative entries and $\lambda>0$), provided that the
particle at $x$ is still alive.

This approach unifies the two main points of view which may be found in
the literature: the \textit{site-breeding} BRW and the most widely used \textit{edge-breeding}
BRW. Indeed in the first case there is a constant reproduction rate $\lambda$
at each site and the offspring is sent accordingly to a probability
distribution on $X$ (thus $(k_{xy})_{x,y\in X}$ is a stochastic matrix).
Examples can be found in \cite{cf:BZ}, \cite{cf:HuLalley} and \cite{cf:Stacey03}
(where it is called \textit{modified} BRW).
In the edge-breeding model, $X$ is a graph and to each (oriented)
edge one associates a reproduction rate $\lambda$
(thus $(k_{xy})_{x,y\in X}$ is the adjacency matrix of the graph).
Some examples are in \cite{cf:BZ}, \cite{cf:Ligg2},  \cite{cf:Pem},
\cite{cf:PemStac1} and \cite{cf:Stacey03}.
On regular graphs (see for instance \cite{cf:Ligg1} and \cite{cf:MadrasSchi}), the site-breeding model employing the
transition matrix of the simple random walk is equivalent, up to a multiplicative constant, to the edge-breeding one.

We consider the BRW with initial configuration given by a single
particle at a fixed site $x$: there are two kinds of survival:
\begin{enumerate}[$(i)$]
\item
\textit{weak} (or global) \textit{survival} -- the total number of particles is positive at each time;
\item
\textit{strong} (or local) \textit{survival} -- the number of particles at  site $x$ is not eventually $0$.
\end{enumerate}
Let us denote by $\lambda_w(x)$ (resp.~$\lambda_s(x)$) the infimum of the values
of $\lambda$ such that there is weak (resp.~strong)
survival with positive probability. Clearly $\lambda_w(x) \le \lambda_s(x)$
and these values do not depend on $x$ when $K$ is irreducible
(see Section~\ref{subsec:graphs}).
For the edge-breeding BRW on a connected graph, in \cite{cf:PemStac1} it was proved
that $\lambda_s=1/M_s$ where $M_s$ is a geometrical parameter of the
graph. This result can be extended to the BRW on weighted graphs
(Theorem~\ref{th:pemantleimproved}). To our knowledge the
behavior of the BRW at $\lambda=\lambda_s(x)$ was yet unknown:
we prove that there is almost sure extinction in Theorem~\ref{th:critb}
(we proved the same result for BRW on multigraphs in \cite{cf:BZ}).

More challenging is the characterization of the weak critical parameter
$\lambda_w(x)$ and the study of the weak critical behavior.
Following the ideas which lead to the characterization of $\lambda_s(x)$
one naturally guesses that $\lambda_w(x)=1/M_w(x)$ (see Section~\ref{subsec:graphs} for the definition).
Indeed in \cite{cf:BZ} we proved that in the irreducible case, $\lambda_w\ge1/M_w$ and we gave sufficient conditions for equality
(for instance all site-breeding BRWs satisfy these conditions).
In this paper we use a different approach which allows us to characterize
 $\lambda_w(x)$ in terms of the existence of solutions of certain
infinite-dimensional linear systems (Theorem~\ref{th:equiv1});
in particular we show that $\lambda_w(x)$ is related to the
so-called Collatz-Wielandt numbers of some linear operator
(see \cite{cf:FN1}, \cite{cf:FN2} and \cite{cf:Marek1} for the
definition).

Thanks to this characterization, we prove a stronger lower bound,
$\lambda_w(x)\ge1/M_w^-(x)$ and give sufficient conditions for equality
(Remark~\ref{finite} and Propositions~\ref{th:fgraph} and \ref{th:condU}).
We show (Example~\ref{exm:2}) that it may be that $\lambda_w(x)=
1/M_w^-(x)\neq1/M_w(x)$. As for the critical behavior, Example~\ref{exm:4}
is a BRW which globally survives at $\lambda_w(x)$ (while for instance
on finite weighted graphs the BRW dies out at $\lambda_w(x)$ -
this is a particular case of Theorem~\ref{th:critg}).
The question whether $\lambda_w(x)=1/M_w^-(x)$ always holds
is, as far as we know, still open.

The basic idea behind the study of $\lambda_w(x)$ relies on the
comparison between the BRW and an infinite-type branching process
(briefly IBP).
It is well known that the probability of extinction of a
Galton-Watson branching process is the smallest positive fixed point
of a certain generating function.
In Section~\ref{sec:GBP} we prove some results on IBPs
by studying an infinite-dimensional generating function and its
fixed points.

The paper is organized as follows: Sections~\ref{subsec:graphs}
and \ref{subsec:genfun} introduce the basic definitions
(among which the definition of weighted graph and of the geometrical
parameters of the graph).
In Section~\ref{sec:fixed} we prove some results on fixed points
for monotone functions in partially ordered sets.
In Section~\ref{sec:GBP} we define IBPs and associate
in a ``canonical'' way an IBP to a given BRW.
Section~\ref{sec:critical} is devoted to the study of
the critical values $\lambda_s(x)$ and $\lambda_w(x)$
(Section~\ref{subsec:critical}) and of the strong and weak critical
behaviors (Section~\ref{subsec:criticalb}).
Finally in Section~\ref{sec:examples} we give some examples of IBPs and BRWs.

\section{Basic definitions and preliminaries}\label{sec:def}

\subsection{Weighted graphs}\label{subsec:graphs}

Let us consider $(X,K)$ where $X$ is a countable (or finite) set and $K=(k_{xy})_{x,y \in X}$ is
a matrix of nonnegative \textit{weights} (that is, $k_{xy} \ge 0$) such that $\sup_{x \in X} \sum_{y \in X}
k_{xy} = M< \infty$. We denote by $(X,K)$ the \textit{weighted graph} with set of edges
$E(X):=\{(x,y) \in X \times X: k_{xy}>0 \}$, where to each edge $(x,y)$ we associate
the weight $k_{xy}$.
We say that $K$ is \textit{irreducible} if
$(X,E(X))$ is a connected graph.

We define recursively
$k^n_{xy}:=\sum_{w\in X} k^{n-1}_{x w} k_{w y}$
(where $k^0_{xy}:=\delta_{xy}$); moreover we set
$T^n_x:=\sum_{y \in X} k^n_{xy}$ and
$\phi^n_{xy}:=\sum_{x_1,\ldots,x_{n-1} \in X \setminus\{y\}} k_{x x_1} k_{x_1 x_2} \cdots k_{x_{n-1} y}$;
by definition
$\phi^0_{xy}:=0$ for all $x,y \in X$.
Clearly $k^n_{xy}$ is the total weight of all paths of length $n$ from $x$ to $y$,
$T^n_x$ is the total weight  of all paths of length $n$ from $x$, while
$\phi^n_{xy}$ is the analog of $k^n_{xy}$ regarding only paths reaching
$y$ for the first time at the $n$-th step.

For $k^n_{xy}$ and $T_x^n$ the following recursive
relations hold for all $n,m \geq 0$
\[
k^{n+m}_{xy}=\sum_{w \in X} k^n_{xw} k^m_{wy}; \\
\qquad
\begin{cases}
T_x^{n+m}=\sum_{w \in X} k^n_{xw} T_w^m \\
\\
T_x^0=1\\
\end{cases}
\]
and, for all $n \ge 1$,
\[
k_{xy}^n=\sum_{i=0}^n \phi_{xy}^i k^{n-i}_{yy}.
\]
Whenever, given $x,y \in X$, there exists $n \in \N$ such that $k^n_{xy}>0$ we
 write $x \to y$; if $x \to y$ and $y \to x$ then
 we write $x \leftrightarrow y$.
This is an equivalence relation; let us denote by $[x]$ the equivalence class of $x$ (usually
called \textit{irreducible class}).
We observe that the summations involved in $k^n_{xx}$ could be equivalently restricted
to sites in $[x]$, moreover $\lambda_s(x)$ depends only on
$[x]$.
Similarly one can prove that $\lambda_w(x)$ depends only on $[x]$.

We introduce the following
geometrical parameters
\[
M_s(x,y;X):=\limsup_{n} (k^n_{xy})^{1/n}, \qquad
M_w(x;X):=\limsup_{n} (T_x^n)^{1/n}, \qquad
M^-_w(x;X):=\liminf_{n} (T_x^n)^{1/n}.
\]
In the rest of the paper, whenever there is no ambiguity, we will omit
the dependence on $X$.
Moreover, we write $M_s(x):=M_s(x,x)$;
supermultiplicative arguments imply that
$M_s(x) =\lim_{n} (k^{dn}_{xx})^{1/dn}$ for some $d \in \N$ hence,
for all $x \in X$, we have that
$M_s(x) \le M^-_w(x) \le M_w(x)$.
%
%
It is easy to show that the above
quantities are constant within an irreducible class; hence in the
irreducible case the dependence on $x,y$ will be omitted.

\subsection{Generating functions}\label{subsec:genfun}

Let us consider the following generating functions
\[
\begin{split}
\Gamma(x,y|\lambda)&:=\sum_{n =0}^\infty k^n_{xy} \lambda^n,
\qquad \Theta(x|\lambda):=\sum_{n =0}^\infty T_x^n \lambda^n,
\qquad \Phi(x,y|\lambda):=\sum_{n =1}^\infty \phi_{xy}^n \lambda^n;
\end{split}
\]
note that the radii of convergence of $\Gamma(x,y|\lambda)$ and $\Theta(x|\lambda)$ are
$1/M_s(x,y)$ and $1/M_w(x)$ respectively.
The following relation holds
\begin{equation}\label{eq:HTheta}
\Gamma(x,y|\lambda)=\Phi(x,y|\lambda)\Gamma(y,y|\lambda)+\delta_{xy}, \quad \forall
\lambda: |\lambda|< \min(1/M_s(x,y),1/M_s(y)).
\end{equation}
Since
\begin{equation}\label{eq:genfun1}
\Gamma(x,x|\lambda)=\frac{1}{1-\Phi(x,x|\lambda)},
\qquad \forall \lambda \in \C: |\lambda|< 1/M_s(x),
\end{equation}
we have that $1/M_s(x)=\max\{ \lambda \geq 0 :\Phi(x,x|\lambda)\leq 1\}$
for all $x \in X$ (see Section 2.2 of \cite{cf:BZ} for details).

\subsection{Fixed points in partially ordered sets}\label{sec:fixed}

Let $(Q, \ge)$ be a partially ordered set and $W:Q \mapsto Q$ be a nondecreasing function,
that is, $x\ge y$ implies $W(x)\ge W(y)$.
Let us denote by $(-\infty, y]$ and $[y, +\infty)$ the \textit{intervals} $\{w \in Q: w \le y\}$ and
$\{w \in Q: w \ge y\}$ respectively.
We consider a topology $\tau$ on $Q$ such that
all the intervals $(-\infty, y]$ and $[y, +\infty)$ are closed.
\begin{pro}
\label{teo:monotone}
Let $W:Q \mapsto Q$ be a nondecreasing function.
\begin{enumerate}
\item[(a)]
If $q \ge W(q)$ then $W((-\infty,q]) \subseteq (-\infty,q]$.
If $q \le W(q)$ then $W([q,+\infty)) \subseteq [q,+\infty)$.
\end{enumerate}
Moreover let us suppose that $q_0 \in Q$ satisfies $W(q_0) \ge q_0$
(resp.~$W(q_0) \le q_0$)
and define the sequence $\{q_n\}_{n \in \N}$
recursively by $q_{n+1}=W(q_n)$, for all $n \in \N$.
The following hold.
\begin{enumerate}
\item[(b)]
The sequence
is nondecreasing (resp.~nonincreasing).
\item[(c)]
If the sequence has a cluster point $q$
and $y$ is such that $y \ge q_0$, $y \ge W(y)$ (resp.~$y \le q_0$, $y \le W(y)$) then
$q \le y$ (resp.~$q \ge y$).
\item[(d)]
Every cluster point $q$  of
$\{q_n\}_{n \in \N}$ satisfies $q \ge q_0$ (resp.~$q \le q_0$).
If $W$ is continuous then
there is at most one cluster point $q$ and
\[
\begin{split}
W(q)&=q \quad \text{ and } \quad
(-\infty,q] =
\bigcap_{y \ge q_0:W(y) \le y} (-\infty, y]
=\bigcap_{y \ge q_0:W(y)=y} (-\infty, y] \\
\Big(\text{resp.~}W(q)&=q \quad \text{ and } \quad
(-\infty,q] =
\bigcup_{y \le q_0:W(y) \ge y} (-\infty, y]
=\bigcup_{y \le q_0:W(y)=y} (-\infty, y]\,\Big).
\end{split}
\]
\end{enumerate}
\end{pro}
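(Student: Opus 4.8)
The plan is to prove (a)--(d) in order, each item leaning on the earlier ones, invoking the topological hypothesis (closedness of the intervals $(-\infty,y]$ and $[y,+\infty)$) only where cluster points appear, namely in (c) and (d), and using continuity of $W$ only once, inside (d). Throughout I would treat only the case $W(q_0)\ge q_0$ (so $\{q_n\}$ is nondecreasing), the ``resp.'' case being obtained verbatim by reversing every inequality, swapping intersections with unions, and exchanging the roles of $(-\infty,\cdot]$ and $[\cdot,+\infty)$. Item (a) is pure monotonicity: if $q\ge W(q)$ and $w\le q$ then $W(w)\le W(q)\le q$, so $W(w)\in(-\infty,q]$, and the other inclusion is symmetric. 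Item (b) is the one-line induction $q_1=W(q_0)\ge q_0$ and $q_{n+1}\ge q_n\Rightarrow q_{n+2}=W(q_{n+1})\ge W(q_n)=q_{n+1}$.

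For item (c), given $y$ with $y\ge q_0$ and $y\ge W(y)$, part (a) makes $(-\infty,y]$ invariant under $W$; since $q_0\in(-\infty,y]$, induction puts every $q_n$ in $(-\infty,y]$. The topological input is the observation that a cluster point $q$ of $\{q_n\}$ lies in $\overline{\{q_m:m\ge N\}}$ for every $N$, hence in the closed set $(-\infty,y]$, so $q\le y$.

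Item (d) splits into three pieces. First, applying the same tail argument to the closed set $[q_n,+\infty)$, which contains $\{q_m:m\ge n\}$ by monotonicity of the sequence, yields $q\ge q_n$ for all $n$, in particular $q\ge q_0$. Second, assuming $W$ continuous: monotonicity gives $W(q)\ge W(q_n)=q_{n+1}$ for every $n$ and $W(q)\ge W(q_0)\ge q_0$, so $q_n\in(-\infty,W(q)]$ for all $n$ and therefore $q\le W(q)$; conversely $W^{-1}((-\infty,q])$ is closed by continuity and, since $q_{n+1}=W(q_n)\le q$ by the previous step, contains every $q_n$, hence contains $q$, giving $W(q)\le q$; thus $W(q)=q$. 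Uniqueness of the cluster point then follows from (c): two cluster points $q,q'$ are both fixed points lying above $q_0$, and (c) applied with $y=q'$ and then with $y=q$ gives $q\le q'\le q$, so $q=q'$. For the displayed interval identities, $q$ itself belongs to both index sets $\{y\ge q_0:W(y)\le y\}$ and $\{y\ge q_0:W(y)=y\}$ (using $q\ge q_0$ and $W(q)=q$), so each intersection is contained in $(-\infty,q]$; conversely every $y$ in either set has $W(y)\le y$, so (c) gives $q\le y$ and hence $(-\infty,q]\subseteq(-\infty,y]$, which gives the reverse inclusion and forces the three sets to coincide.

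The one genuinely delicate point is that $\tau$ need not be metrizable, so I would avoid extracting convergent subsequences and instead work throughout with the characterization ``$q$ is a cluster point of $\{q_n\}$ iff $q\in\overline{\{q_m:m\ge N\}}$ for every $N$''; this, together with the continuity of $W$ (used only via the fact that preimages of closed sets are closed), is exactly where the hypotheses on $(Q,\ge,\tau)$ are consumed, while everything else is formal order-theoretic bookkeeping.
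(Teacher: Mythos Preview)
Your proof is correct and follows essentially the same route as the paper: items (a)--(b) by direct monotonicity/induction, (c) by trapping the sequence in the closed interval $(-\infty,y]$, and (d) by combining closedness of $[q_0,+\infty)$, continuity, and (c) for uniqueness and the interval identities. The one place you are more careful than the paper is the step $W(q)=q$: the paper simply writes ``continuity implies that for every cluster point $W(q)=q$'', whereas you give an explicit order-theoretic argument (first $q\le W(q)$ via $q_n\in(-\infty,W(q)]$, then $W(q)\le q$ via $q_n\in W^{-1}((-\infty,q])$), which avoids any implicit appeal to sequential continuity and is robust to the non-metrizability issue you flag.
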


\begin{proof}
\begin{enumerate}[(a)]
\item
Note that if $y \in (-\infty,q]$ then
$W(y) \le W(q) \le q$. The second assertion is proved analogously.
\item
This is easily proved by induction on $n$.
\item
By induction on $n$ we have $q_n \in (-\infty,y]$ which is closed
by assumption, thus $q \in (-\infty,y]$.  The second assertion is proved analogously.
\item
The first claim follows since $[q_0,+\infty)$ is closed.
Continuity implies that for every cluster point $W(q)=q$.
Moreover
if $q$ and $\widetilde q$ are two cluster points then since
$q_0\le \widetilde q$
then by  (c)
$q \le \widetilde q$ and similarly $\widetilde q \le q$ whence $q= \widetilde q$.
By (c)
$(-\infty, q]=\bigcap_{y \ge q_0:W(y) \le y} (-\infty, y]$.
Moreover since $W(q)=q$
\[
(-\infty, q] \supseteq
\bigcap_{y \ge q_0:W(y) = y} (-\infty, y] \supseteq
\bigcap_{y \ge q_0:W(y) \le y} (-\infty, y]
\]
whence the claim. The proof of the second claim is analogous.
\end{enumerate}

\end{proof}

\begin{cor}
\label{cor:monotone}
Let $Q$ have a smallest element $\mathbf 0$ (resp.~a largest element $\mathbf 1$),
$W:Q \mapsto Q$ be a continuous nondecreasing function.
If $\{q_n\}_{n \in \N}$ is recursively defined by
 \begin{equation}
 \label{eq:qn}
\begin{cases}
q_{n+1}=W(q_n) \\
q_0=\mathbf 0 \quad \text{ (resp.~}q_0=\mathbf 1 \text{)}. \\
\end{cases}
\end{equation}
then $\{q_n\}_{n \in \N}$ has at most one cluster point $q$;  moreover $q$
is the smallest (resp.~largest) fixed point of $W$ and for any $y \in Q$, we have that
$q<y$ (resp.~$q>y$) if and only if there exists $y^\prime < y$ (resp.~$y^\prime > y$) such that $W(y^\prime) \le y^\prime$
(resp.~$W(y^\prime) \ge y^\prime$).

\end{cor}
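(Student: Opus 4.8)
The plan is to read the corollary off Proposition \ref{teo:monotone} by specializing the starting point to an extreme element of $Q$; I would do the case $q_0=\mathbf 0$ in detail, the case $q_0=\mathbf 1$ being its mirror image obtained from the ``resp.'' half of that proposition. The key observation is that when $\mathbf 0$ is the least element one has $W(q_0)=W(\mathbf 0)\ge\mathbf 0=q_0$ automatically, so the standing hypothesis $W(q_0)\ge q_0$ in parts (b)--(d) of Proposition \ref{teo:monotone} is satisfied for free. Part (b) then says $\{q_n\}$ is nondecreasing, and part (d), using continuity of $W$, gives both the ``at most one cluster point'' assertion and, whenever a cluster point $q$ exists, the identity $W(q)=q$, so that $q$ is a fixed point of $W$.

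To see that $q$ is the \emph{smallest} fixed point, I would take any $y$ with $W(y)=y$; then $y\ge\mathbf 0=q_0$ and $W(y)=y\le y$, so Proposition \ref{teo:monotone}(c) applied to this $y$ forces $q\le y$. For the final equivalence, one implication is immediate: if $q<y$, then $y'=q$ does the job, since $W(q)=q=y'\le y'$. For the converse, given $y'<y$ with $W(y')\le y'$, Proposition \ref{teo:monotone}(c)---now with $y'$ in the role of $y$, using $y'\ge q_0$ and $y'\ge W(y')$---yields $q\le y'$, and a one-line antisymmetry argument then upgrades $q\le y'<y$ to $q<y$. Reversing all inequalities and replacing $\mathbf 0$ by $\mathbf 1$ gives the ``resp.'' statements.

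I do not foresee any genuine obstacle: the corollary is just a convenient repackaging of Proposition \ref{teo:monotone}. The only points needing care are that continuity is used only to obtain uniqueness of the cluster point and the fixed-point identity in part (d); that the assertions about $q$ are vacuous unless a cluster point actually exists; and that deducing the strict inequality $q<y$ from $q\le y'$ and $y'<y$ relies on antisymmetry of the order (without it the case $q=y=y'$ could not be ruled out).
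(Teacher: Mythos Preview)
Your proposal is correct and follows essentially the same route as the paper: the paper also notes $W(\mathbf 0)\ge\mathbf 0$, invokes Proposition~\ref{teo:monotone} to obtain monotonicity, uniqueness of the cluster point and the fixed-point identity, then dispatches the final equivalence exactly as you do (taking $y'=q$ for one direction and $q\le y'<y$ for the other). Your version is a bit more explicit---spelling out the use of part~(c) to get minimality and the antisymmetry step behind the strict inequality---but there is no substantive difference in approach.
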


\begin{proof}
Clearly $W(\mathbf 0) \ge \mathbf 0$ hence (according to the previous proposition)
 the sequence
$\{q_n\}_{n \in \N}$ is nondecreasing and
since $q_0=\mathbf 0 \le y$ for all $y \in X$, there at most one cluster point
$q$, and it is the smallest
fixed point of $W$. If $q<y$ then take $y^\prime=q$; on the other hand if
there exists $y^\prime < y$ such that $W(y^\prime) \le y^\prime$ then $q\le y^\prime <y$.
The proof of the second claim follows analogously.
\end{proof}

\section{Infinite-type branching processes}\label{sec:GBP}

Let $X$ be a set which is at most countable. Each element of this set
represents a different type of particle of a (possibly) infinite-type branching process.
Given $f \in \Psi:=\{g \in \N^X:
S(g):=\sum_{x \in X} g(x)< +\infty\}$, at the end of its life
a particle of type $x$ gives birth to $f(y)$ children of type $y$ (for all $y \in X$) with
probability $\mu_x(f)$ where $\{\mu_x\}_{x \in X}$ is a family of probability
distributions on the (countable) measurable space $(\Psi,2^\Psi)$.

To the family $\{\mu_x\}_{x \in X}$ we associate a generating function $G:[0,1]^X \to [0,1]^X$
which can be considered as an infinite dimensional power series. More precisely,
for all $z \in [0,1]^X$ the function $G(z) \in [0,1]^X$ is defined as follows
\begin{equation}
\label{eq:genfun}
G(z|x):= \sum_{f \in \Psi} \mu_x(f) \prod_{y \in X} z(y)^{f(y)}.
\end{equation}
Note that  $G$ is continuous with respect to the \textit{pointwise convergence topology}
(or \textit{product topology}) on $[0,1]^X$.
Indeed,
every $f \in \Psi$ is finitely supported, hence
$\prod_{y \in X} z(y)^{f(y)}$ is
a finite product and
$z \mapsto \prod_{y \in X} z(y)^{f(y)}$
is continuous. 
The continuity of $G$ follows from Weierstrass criterion for uniform convergence,
since
$\sup_{z \in [0,1]^X}
\mu_x(f) \prod_{y \in X} z(y)^{f(y)}
= \mu_x(f)$ which is summable (with respect to $f \in \Psi$).

The set $[0,1]^X$ is partially ordered by $z \ge z^\prime$ if and only if $z(x) \ge z^\prime(x)$ for all $x \in X$;
by $z > z^\prime$ we mean that $z \ge z^\prime$ and $z \not = z^\prime$.
We denote by $\mathbf 0$ and $\mathbf 1$
the smallest and largest element of $[0,1]^X$ respectively, that is $\mathbf 0(x):=0$ and $\mathbf 1(x):=1$ for every $x \in X$.
The topological (partially ordered) space $[0,1]^X$ is compact and every monotone sequence has a cluster point, moreover
all the intervals $(-\infty,z] \equiv [\mathbf 0, z]$ and $[z,+\infty) \equiv [z,\mathbf 1]$
are closed sets whence all the hypotheses of Corollary~\ref{cor:monotone}
are satisfied. Let us note that $G(\mathbf 1)=\mathbf 1$
and  $G$ is  nondecreasing.
From now on we suppose that $\mu_x(\mathbf 0)>0$ for some $x \in X$
in order to avoid a trivial case of almost sure survival.

Let $q_n(x)$ be
the probability  of extinction before or at the $n$-th generation starting from a single
initial particle of type $x$; and let $q(x)$ be the probability of extinction
at any time starting from the same configuration.
Note that $q_n$ and $q$ can be viewed as
elements of $[0,1]^X$. Clearly $q_0=\mathbf 0$ and
\[\begin{split}
q_{n+1}(x) &= \sum_{f \in \Psi} \mu_x(f) \prod_{y \in X} q_n(y)^{f(y)}= G(q_n|x);\\
q(x)&=\lim_{n \to \infty} q_n(x).
\end{split}
\]
According to Proposition~\ref{teo:monotone} and Corollary~\ref{cor:monotone},
$q$ is the smallest fixed point of $G$ and $q< \mathbf 1$ if and only if
\begin{equation}
\label{eq:ineq}
G(y) \le y \qquad \text{for some } y < \mathbf 1.
\end{equation}
Hence,  if $y$ satisfies~\eqref{eq:ineq}
then $y(x)$ is an upper bound for $q(x)$.
Conversely if we define
\begin{equation}
\label{eq:H1}
H(v):=\mathbf1-G(\mathbf1-v)
\end{equation}
then $H$ is nondecreasing and continuous; moreover if
 \begin{equation}
 \label{eq:vn}
\begin{cases}
v_{n+1}=H(v_n) \\
v_0=\mathbf 1. \\
\end{cases}
\end{equation}
then $\{v_n\}_{n\in\N}$ is nonincreasing and has a unique cluster point
$v:=\lim_{n \to \infty} v_n=\mathbf1-q$.
Clearly $v_n(x)$ can be interpreted as the probability of survival up to
the $n$-th generation for the BRW starting with one particle
on $x$ ($v(x)$ being the probability of surviving forever).

Moreover $v > \mathbf 0$ if and only if $H(y) \ge y$ for
some $y \ge \mathbf 0$.
Note that in this case $y(x)$ is a lower bound for
$v(x)$.
Let $G_n$ and $H_n$ be the $n$-th iterates
of $G$ and $H$; $H_n(v)=\mathbf1-G_n(\mathbf1-v)$
and they are continuous and nondecreasing.

\begin{rem}
\label{rem:irrid}
Let us consider the graph $(X,E_\mu)$ where $E_\mu:=\{(x,y) \in X^2: \exists f \in \Psi, f(y)>0, \mu_x(f)>0\}$.
We call the IBP \textit{irreducible} if and only if the graph
$(X,E_\mu)$ is connected. It is easy to show that for the extinction probabilities $q$ of
an irreducible IBP we have
$q<\mathbf 1$ (that is $v>\mathbf 0$) if and only if $q(x)< 1$ for all $x \in X$
(that is $v(x)>0$ for all $x \in X$).
\end{rem}

\subsection{Infinite-type branching processes associated to branching random walks}
\label{subsec:ibrw}

In order to study the weak behavior of the BRW,
we associate a discrete-time branching process to the (continuous-time)
BRW in such a way that
they both survive or both die at the same time.
Each particle of the BRW living on a site $x$ will be given
the label $x$ which represents its type.
We suppose that the BRW starts from a single particle in
a vertex $x_0$; if there are several particles we repeat
this construction for each initial particle.
The IBP is constructed as follows: the 0th generation
is one particle of type $x_0$; the 1st generation of the IBP
is the collection of the children of this particle (ever born):
 this collection is almost surely
finite, say, $r_1$ particles in the vertex $x_1$, $\ldots$,
$r_m$ particles in $x_m$. Thus from the point of view of the IBP
the 1st generation is the collection of
$r_1$ particles of type $x_1$, $\ldots$,
$r_m$ particles of type $x_m$.
Take one particle of type $x_1$ in the 1-st generation and collect all its children,
repeat this
for all the particles in the 1st generation: the set of all
these new particles is the 2nd generation.
Proceeding
in the same way we construct the 3rd generation and so on.

Clearly the progeny of the IBP is the same as the progeny
of the BRW hence the latter is finite (i.e.~the BRW dies out)
if and only if the former is finite (i.e.~the IBP dies out).
The probabilities
of extinction of the IBP
(that is, the smallest fixed point of the
generating function), regarded as an element of $[0,1]^X$, coincide with
the probabilities of extinction
of the BRW.

Let us compute the generating function of this IBP.
Roughly speaking, the probability for a particle of type $x$ of having
$f(y)$ children of type $y$ for all $y \in X$ (where $f \in \Psi$)
is the probability that, for all $y \in Y$, a Poisson clock
of rate $\lambda k_{xy}$ rings $f(y)$ times before the death
of the original particle (i.e.~a clock of rate 1).
Elementary computations show that
\[
\mu_x(f)= \frac{
S(f)! \prod_{y \in X} (\lambda k_{xy})^{f(y)}}
{(1+\lambda \sum_{y \in X} k_{xy})^{
S(f)
+1}\prod_{y \in X} f(y)!}.
\]
Recalling~\eqref{eq:genfun} we have
\begin{equation}
\label{eq:G-BRW}
\begin{split}
G^\lambda(z|x)&=\sum_{f \in \Psi}
\frac{
S(f)! \prod_{y \in X} (\lambda k_{xy})^{f(y)}}
{(1+\lambda \sum_{y \in X} k_{xy})^{
S(f)+1}\prod_{y \in X} f(y)!}
 \prod_{y \in Y} z(y)^{f(y)} \\
&= \frac{1}{1+\lambda \sum_{y \in X} k_{xy}}
\sum_{i=0}^{+\infty} \sum_{f:
S(f)
=i}
\frac{i!}{\prod_{y \in X} f(y)!}
\frac{1}{
(1+\lambda \sum_{y \in X} k_{xy})^i
}
 \prod_{y \in X} \Big ( \lambda k_{xy}z(y)
 \Big )^{f(y)} \\
&=
\frac{1}{1+\lambda \sum_{y \in X} k_{xy}}
\sum_{i=0}^{+\infty}
\Big ( \frac{\lambda \sum_{y \in X} k_{xy}z(y) }
{1+\lambda \sum_{y \in X} k_{xy}} \Big )^i
 =
\frac{1}{1+\lambda \sum_{y \in X} k_{xy}(1-z(y))}.
\end{split}
\end{equation}
We note that
the quantity $\lambda k_{xy}$
can be interpreted as the expected number of offsprings of type $y$ of
a particle of type $x$.
Clearly in this case
\[
H^\lambda(v;x)=
\frac{\lambda \sum_{y \in X} k_{xy}v(y)}{1+\lambda \sum_{y \in X} k_{xy}v(y)}.
\]
If we define the bounded linear operator $K:l^\infty(X) \mapsto l^\infty(X)$ as
$Kv(x):=\sum_{y \in X} k_{xy}v(y)$ then
\begin{equation}
\label{eq:H-BRW}
H^\lambda(v)= \frac{\lambda Kv}{\mathbf1+\lambda Kv},
\end{equation}
hence the functions $H^\lambda_n$ and $G^\lambda_n$ from $[0,1]^X$ into itself are nondecreasing and continuous with respect
to $\|\cdot\|_\infty$ for every $n \ge 1$.
In particular each iterate $H^\lambda_n$ can be extended to the positive cone
$l^\infty_+(X):=\{v \in l^\infty(X): v \ge \mathbf 0\}$.
We observe that the operator $K$ preserves $l^\infty_+(X)$.

When there is no ambiguity, we will drop the dependence on $\lambda$ in these functions.
From now on, if not stated otherwise, it will be tacitly understood that
$G$ and $H$ are defined by equations~\eqref{eq:G-BRW} and~\eqref{eq:H-BRW}
respectively.

It is easy to show that $K$ is irreducible (as stated in
Section~\ref{subsec:graphs}) if and only if the corresponding
IBP is irreducible in the sense of Remark~\ref{rem:irrid}.

\section{The critical values and the critical behaviors}\label{sec:critical}

\subsection{The critical values}\label{subsec:critical}

In \cite{cf:PemStac1} it was proved that, in the irreducible case,
$\lambda_s=1/M_s$ for any
graph.
In \cite{cf:BZ} we used a different approach to extend this result to multigraphs;
the same arguments hold for weighted graphs (we repeat the proof for
completeness).
This approach allows us to study the critical behavior when $\lambda=\lambda_s(x)$
(see Theorem~\ref{th:critb}).
We observe that in the proof of the following theorem to the BRW we associate a particular branching process
which is not the one introduced in Section~\ref{subsec:ibrw}.
The proof relies on the concept of (reproduction) \textit{trail}: see \cite{cf:PemStac1}
for the definition.

\begin{teo}\label{th:pemantleimproved}
For every weighted graph $(X,K)$ we have that $\lambda_s(x)=1/M_s(x)$.
\end{teo}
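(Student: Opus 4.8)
The plan is to prove the two inequalities $\lambda_s(x)\ge1/M_s(x)$ and $\lambda_s(x)\le1/M_s(x)$ separately, following \cite{cf:PemStac1} and \cite{cf:BZ}: the first by a first-moment estimate showing that the BRW dies out locally at $x$ whenever $\lambda<1/M_s(x)$, the second by comparison with a single-type Galton--Watson process whose mean offspring number is $\Phi(x,x|\lambda)$, which exceeds $1$ when $\lambda>1/M_s(x)$. We shall use two facts from Section~\ref{subsec:genfun}: $\Gamma(x,x|\,\cdot\,)$ has radius of convergence $1/M_s(x)$, and $1/M_s(x)=\max\{\lambda\ge0:\Phi(x,x|\lambda)\le1\}$, so that (since $\Phi(x,x|\,\cdot\,)$ is nondecreasing) $\Phi(x,x|\lambda)>1$, possibly $=+\infty$, for every $\lambda>1/M_s(x)$.

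\emph{Lower bound.} Consider the IBP canonically associated with the BRW (Section~\ref{subsec:ibrw}): its particles are exactly those of the BRW, a particle of type $x$ being one located at the site $x$. As noted there, the mean matrix of this IBP is $\lambda K$, so, starting from one particle of type $x$, the expected total number of particles of type $x$ ever produced equals $\sum_{n\ge0}\lambda^n k^n_{xx}=\Gamma(x,x|\lambda)$, which is finite for $\lambda<1/M_s(x)$. Hence, for such $\lambda$, almost surely only finitely many particles are ever located at $x$, and since each has an a.s.\ finite lifespan the BRW dies out locally at $x$; therefore $\lambda_s(x)\ge1/M_s(x)$.

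\emph{Upper bound.} Fix $\lambda>1/M_s(x)$ and use the reproduction-trail construction of \cite{cf:PemStac1} (which, as remarked before the statement, produces a branching process different from the IBP of Section~\ref{subsec:ibrw}). Declare a particle located at $x$ to be a child, in the \emph{$x$-return process}, of the most recent of its ancestors located at $x$ precisely when the genealogical chain joining the two meets $x$ only at its endpoints. By the branching property each particle located at $x$ starts an independent fresh copy of the BRW, so the $x$-return process is a genuine single-type Galton--Watson process; its offspring law satisfies $p_0>0$ (with positive probability the particle dies before breeding), and by the same branching computation as above, but with the intermediate sites restricted to $X\setminus\{x\}$, its mean offspring number is $\sum_{n\ge1}\lambda^n\phi^n_{xx}=\Phi(x,x|\lambda)>1$. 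A Galton--Watson process with $p_0\in(0,1)$ and mean offspring number greater than $1$ survives with positive probability; this also holds when $\Phi(x,x|\lambda)=+\infty$, for then the offspring law is either improper, so that survival is immediate, or proper with infinite mean, and a convex generating function $f$ with $f(0)=p_0\in(0,1)$, $f(1)=1$ and $f'(1^-)=+\infty$ has a fixed point in $(0,1)$. On the event that the $x$-return process survives, infinitely many particles are located at $x$, and since each has a positive lifespan the BRW survives locally at $x$. Hence $\lambda_s(x)\le1/M_s(x)$, and combined with the lower bound $\lambda_s(x)=1/M_s(x)$.

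\emph{Main obstacle.} The delicate part is the $x$-return construction: one must verify that the ``first return to $x$'' structure is genuinely a Galton--Watson process with i.i.d.\ offspring --- this is exactly where one uses that, unlike the contact process, the BRW consists of independently evolving particles --- and that its survival is equivalent to the continuous-time event that the number of particles at $x$ is not eventually $0$, together with the combinatorial bookkeeping matching genealogical chains through $X\setminus\{x\}$ with the first-passage weights $\phi^n_{xx}$. The first-moment computation and the Galton--Watson dichotomy, including its infinite-mean version, are routine.
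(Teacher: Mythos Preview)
Your proof is correct, but it differs from the paper's in one structural respect. The paper uses the $x$-return Galton--Watson construction to prove \emph{both} directions at once: it argues that the BRW survives locally if and only if the return process does, i.e.\ if and only if $\Phi(x,x|\lambda)>1$, and then invokes $1/M_s(x)=\max\{\lambda\ge0:\Phi(x,x|\lambda)\le1\}$ to conclude directly. You instead split the argument, handling $\lambda<1/M_s(x)$ by a first-moment bound on $\Gamma(x,x|\lambda)$ through the IBP of Section~\ref{subsec:ibrw}, and reserving the return-process comparison for $\lambda>1/M_s(x)$.

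Both routes are valid. Your lower bound is slightly more elementary (no need to check that extinction of the return process forces local extinction of the BRW), and your explicit treatment of the case $\Phi(x,x|\lambda)=+\infty$ is a welcome bit of care the paper omits. On the other hand, the paper's two-sided equivalence pays a dividend later: Theorem~\ref{th:critb} on a.s.\ local extinction at $\lambda=\lambda_s(x)$ follows immediately from $\Phi(x,x|1/M_s(x))\le1$ once the equivalence is in hand, whereas your argument as written would need the ``only if'' half of the Galton--Watson comparison (or a separate argument) to recover that critical result.
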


\begin{proof}
Fix $x \in X$,
consider a path $\Pi:=\{x=x_0, x_1, \ldots, x_n=x\}$ and
define its number of cycles $\mathbb{L}(\Pi):=|\{i=1,
\ldots,n:x_i=x\}|$; the expected number of trails along such a path
is $\lambda^n \prod_{i=0}^{n-1} k_{x_i x_{i+1}}$
(i.e. the expected number of particles ever born at $x$, descending from the original particle
at $x$ and whose genealogy is described by the path $\Pi$ -- their mothers were at $x_{n-1}$, their
grandmothers at $x_{n-2}$ and so on).
Disregarding the original time scale, to the BRW there
corresponds a Galton-Watson branching process: given any particle $p$ in $x$
(corresponding to a trail with $n$ cycles), define its children
as all the particles whose trail is a prolongation of the trail of $p$ and is associated
with a spatial path with $n+1$ cycles.
Hence a particle is of the $k$-th generation if and only if the
corresponding trail has $k$ cycles; moreover it  has one (and only one)
parent in the $(k-1)$-th generation. Since each particle behaves
independently of the others then the process is markovian. Thus the BRW
survives strongly if and only if this branching process does.
The expected number of children of the branching process is the sum
over $n$ of the expected number of trails of length
$n$ and one cycle, that is $\sum_{n=1}^\infty \phi_{x,x}^n\lambda^n=\Phi(x,x|\lambda)$.
Thus we have a.s.~local extinction if and only if $\Phi(x,x|\lambda)\leq 1$, that is,
$\lambda(x) \leq 1/M_s(x)$.
\end{proof}

We turn our attention to the weak critical parameter $\lambda_w(x)$, which, by Corollary~\ref{cor:monotone}, may be characterized in terms of the function $H^\lambda$
(defined by equation~\eqref{eq:H-BRW}):
\begin{equation}
\label{eq:lambdaw}
\begin{split}
\lambda_w(x)&=\inf \{\lambda \in \R: \exists v \in l^\infty_+(X), v(x) >  0 , H^\lambda(v) \ge v\} \\
&=
\inf \Big \{\lambda \in \R: \exists v \in [0,1]^X, v(x)> 0 , \lambda Kv \ge \frac{v}{1-v} \Big \}.
\end{split}
\end{equation}
Our goal is to give other characterizations of $\lambda_w(x)$.
Theorem~\ref{th:equiv1} shows that, for every $n \ge 1$
\begin{equation}
\label{eq:lambdaw3}
\begin{split}
\lambda_w(x)&=\inf \{\lambda \in \R: \exists v \in l^\infty_+(X), v(x) >  0 , H^\lambda_n(v) \ge v\}\\
&=\inf \{\lambda \in \R: \exists v \in l^\infty_+(X), v(x)> 0, \lambda^n K^n v \ge v \};
\end{split}
\end{equation}
thus, by taking $n=1$ in the previous equation,
$\lambda_w(x)=\inf \{\/ \undertilde r\,\!_K(v): v \in l^\infty(X), v(x)=1\}$ where
$\undertilde r\,\!_K(v)$ is the
 \textit{lower
Collatz-Wielandt number of} $v$ (see \cite{cf:FN1}, \cite{cf:FN2} and \cite{cf:Marek1}).

We note that equation~\eqref{eq:lambdaw3} is particularly useful to compute
the value of $\lambda_w$ (indeed solving the linear inequality therein is easier than
solving the nonlinear inequality in~\eqref{eq:lambdaw}). Unfortunately
 the critical (global) survival of the BRW (with one initial particle at $x$)
 is equivalent to the existence of
 a solution of $\lambda_w(x) Kv\ge v/(1-v)$ with $v(x)>0$ (see Example~\ref{exm:4}).
The existence of a solution of $\lambda_w(x) Kv\ge v$ does not imply critical survival.

\begin{teo}
\label{th:equiv1}
Let $(X,K)$ be a weighted graph and let $x \in X$.
\begin{enumerate}[(a)]
\item If $\lambda \le \lambda_w(x)$ and $v \in [0,1]^X$ is such that $\lambda Kv \ge v/(1-v)$
then $\inf_{y:x \to y,v(y) > 0} v(y) = 0$.
\item For all $n \in \N$, $n \ge 1$ we have
\[
\lambda_w(x)
=\inf \{\lambda \in \R: \exists v \in l^\infty_+(X), v(x) >  0 \text{ such that } H_n^\lambda(v) \ge v\}.
\]
\item For all $n \in \N$, $n \ge 1$ we have
\[
\lambda_w(x)=
\inf \{\lambda \in \R: \exists v \in l^\infty_+(X), v(x)> 0 \text{ such that } \lambda^n K^n v \ge v \}.
\]
\end{enumerate}
\end{teo}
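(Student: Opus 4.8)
The three characterizations in Theorem~\ref{th:equiv1} should be proved together, exploiting the fact that $H^\lambda_n$ and $H^\lambda$ share the same relevant fixed-point data. Write $\lambda_w=\lambda_w(x)$ and recall from~\eqref{eq:lambdaw} that $\lambda_w=\inf\{\lambda: \exists v\in l^\infty_+(X),\ v(x)>0,\ H^\lambda(v)\ge v\}$, and that (by Corollary~\ref{cor:monotone} applied to the continuous nondecreasing map $H^\lambda$ on $[0,1]^X$) we have $v^\lambda:=\mathbf1-q^\lambda>\mathbf0$, i.e.\ global survival, exactly when $H^\lambda(y)\ge y$ for some $y>\mathbf0$. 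The plan is: (i) prove (a); (ii) deduce (b) for all $n$; (iii) translate (b) into (c) via the explicit formula~\eqref{eq:H-BRW}.

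\textbf{Part (a).} Suppose $\lambda\le\lambda_w$ and $v\in[0,1]^X$ satisfies $\lambda Kv\ge v/(\mathbf1-v)$, equivalently $H^\lambda(v)\ge v$. Let $y$ be the restriction-to-the-accessible-class trick: replace $v$ by $\tilde v$ which agrees with $v$ on $\{y:x\to y\}$ and is $0$ elsewhere. Since $K$ only moves mass forward along edges, one checks $K\tilde v(z)\le Kv(z)$ with equality for $z$ in the accessible set, so $H^\lambda(\tilde v)\ge\tilde v$ as well, and $\tilde v$ is supported on $[x]^{\to}:=\{y:x\to y\}$. Now suppose for contradiction that $c:=\inf_{y:x\to y,\,v(y)>0}v(y)>0$. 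If $v(x)>0$ then $\tilde v\ge c\,\mathbf1_{\{y:v(y)>0,\,x\to y\}}$ and in particular $\tilde v(x)>0$, so $H^\lambda(\tilde v)\ge\tilde v$ with $\tilde v(x)>0$ witnesses $\lambda\ge\lambda_w$; combined with $\lambda\le\lambda_w$ we get $\lambda=\lambda_w$, and then critical global survival would follow — but that is \emph{not} what we want to exclude. The actual argument must instead use strict monotonicity of $H^\lambda$ in $\lambda$: for $\lambda'<\lambda$ slightly smaller, the same $\tilde v$ (possibly rescaled by $c$) still satisfies $H^{\lambda'}(\tilde v)\ge\tilde v$ because the inequality $\lambda Kv\ge v/(\mathbf1-v)$ has slack bounded below when $v\ge c$ on its support; hence $\lambda'\ge\lambda_w$, so $\lambda_w\le\lambda'<\lambda\le\lambda_w$, a contradiction. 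The case $v(x)=0$ is handled by the observation that if $x\to y$ and $v(y)>0$ then iterating $H^\lambda(v)\ge v$ propagates positivity back toward $x$ (using $k^m_{xy}>0$ for some $m$), forcing $v(x)>0$ after finitely many steps, reducing to the previous case. I expect \textbf{Part (a) to be the main obstacle}: the delicate point is extracting quantitative slack from $\lambda Kv\ge v/(\mathbf1-v)$ that survives a perturbation of $\lambda$, which is exactly where the uniform lower bound $c>0$ on the support is used.

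\textbf{Part (b).} The inclusion ``$\ge$'' (i.e.\ the displayed infimum is $\le\lambda_w$) is immediate: if $H^\lambda(v)\ge v$ with $v\in l^\infty_+(X)$, $v(x)>0$, then applying the nondecreasing map $H^\lambda$ repeatedly gives $H^\lambda_n(v)\ge H^\lambda_{n-1}(v)\ge\dots\ge v$, so the witness for the $n=1$ problem is a witness for the $n$-problem; taking infima over $\lambda$ gives the inequality. For the reverse inclusion, fix $n\ge1$ and suppose $H^\lambda_n(v)\ge v$ with $v(x)>0$, $v\in l^\infty_+(X)$; we may normalize $v\le\mathbf1$ since $H^\lambda$ maps $[0,1]^X$ into itself and is nondecreasing, so $w:=v\wedge\mathbf1$ still satisfies $H^\lambda_n(w)\ge H^\lambda_n(v)\wedge H^\lambda_n(\mathbf1)=\dots$ — more cleanly, run the iteration $v_0=v\wedge\mathbf1$, $v_{k+1}=H^\lambda_n(v_k)$; this is nondecreasing with a cluster point $\bar v\in[0,1]^X$ fixed by $H^\lambda_n$ with $\bar v(x)\ge v(x)>0$. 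Then $u:=\bigvee_{j=0}^{n-1}H^\lambda_j(\bar v)$ satisfies $H^\lambda(u)\ge u$ (because $H^\lambda$ commutes with the iterates and $H^\lambda_n(\bar v)=\bar v$ makes the join cyclic under $H^\lambda$), and $u(x)\ge\bar v(x)>0$, so $u$ witnesses $\lambda\ge\lambda_w$ in the $n=1$ sense. Taking infima yields the displayed infimum $\ge\lambda_w$, completing (b).

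\textbf{Part (c).} This is a direct rewriting of (b) using~\eqref{eq:H-BRW}. One shows that for $v\in l^\infty_+(X)$ the condition $H^\lambda_n(v)\ge v$ is equivalent to $\lambda^nK^nv\ge v$: for the forward direction, note $H^\lambda(v)\le\lambda Kv$ pointwise (since $t/(1+t)\le t$), hence by monotonicity and induction $v\le H^\lambda_n(v)\le\lambda^nK^nv$. For the reverse, if $\lambda^nK^nv\ge v$, replace $v$ by $\varepsilon v$ for small $\varepsilon>0$: the inequality $\lambda^nK^n(\varepsilon v)\ge\varepsilon v$ is homogeneous so still holds, while $H^\lambda(\varepsilon v)=\lambda K(\varepsilon v)/(\mathbf1+\lambda K(\varepsilon v))\ge(1-\delta)\lambda K(\varepsilon v)$ for $\varepsilon$ small enough that $\|\lambda K(\varepsilon v)\|_\infty$ is tiny, and iterating gives $H^\lambda_n(\varepsilon v)\ge(1-\delta)^{\text{(geometric sum)}}\lambda^nK^n(\varepsilon v)\ge(1-\delta')\varepsilon v\ge\varepsilon v$ after a further rescaling — here one must be slightly careful and instead argue that since $\lambda^nK^nv\ge v$, the map $H^{\lambda}_n$ has a positive subfixed point by a standard fixed-point/limit argument (run $v_k=H^\lambda_n(\min(\varepsilon v,\mathbf1)\vee v_{k-1})$ or simply invoke that $\lambda^nK^nv\ge v$ with $v(x)>0$ forces $\lambda\ge$ the $n$-th root of the relevant spectral-type quantity, which by~\eqref{eq:lambdaw3}'s first line equals $\lambda_w$). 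The cleanest route: both displayed infima in (b) and (c) are manifestly monotone in $\lambda$ and one shows they have the \emph{same} set of admissible $\lambda$ up to the boundary by the pointwise comparison $H^\lambda(v)\le\lambda Kv$ (giving $\{(b)\text{-admissible}\}\subseteq\{(c)\text{-admissible}\}$, hence $\inf_{(c)}\le\inf_{(b)}=\lambda_w$) together with part (a) of the theorem applied at $\lambda_w$ (to rule out that the $(c)$-infimum is strictly smaller): if $\lambda<\lambda_w$ admitted $v$ with $\lambda^nK^nv\ge v$, $v(x)>0$, then as above $H^\lambda_n(\varepsilon v)\ge\varepsilon v$ for small $\varepsilon$, contradicting (b). This closes the circle $\lambda_w=\inf_{(b)}=\inf_{(c)}$.
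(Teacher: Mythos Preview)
Your overall architecture is right, and Part~(b) is essentially correct (though more elaborate than needed: the paper simply notes that the iterates $(H^\lambda_n)^i(\mathbf 1)$ are the subsequence $v_{ni}$ of the monotone sequence $v_j=(H^\lambda)^j(\mathbf 1)$, so both converge to the same limit, which is then a fixed point of $H^\lambda$ itself with value $>0$ at $x$). But Parts~(a) and~(c) each have a real gap, and in both cases the missing idea is the same: you must \emph{trade a strict inequality in $\lambda$ for the slack you need}.

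\textbf{Part (a).} You correctly see that the uniform lower bound $\delta>0$ should give slack allowing $\lambda$ to be decreased, but ``slack bounded below'' is not a proof. The paper's mechanism is concrete: scale $v$ by $t\in(0,1)$. From $\lambda Kv\ge v/(1-v)$ one gets $\lambda K(tv)\ge \dfrac{tv}{1-tv}\cdot\dfrac{1-tv}{1-v}$, and wherever $v(y)\ge\delta$ the factor $\dfrac{1-tv}{1-v}\ge\dfrac{1-t\delta}{1-\delta}>1$ (while at points with $v(y)=0$ the target inequality is trivial). Hence $\bigl(\lambda\tfrac{1-\delta}{1-t\delta}\bigr)K(tv)\ge \dfrac{tv}{1-tv}$ with a \emph{strictly smaller} coefficient, so $\lambda>\lambda\tfrac{1-\delta}{1-t\delta}\ge\lambda_w(x)$, contradicting $\lambda\le\lambda_w(x)$. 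Your treatment of the case $v(x)=0$ is also incomplete: replacing $v$ by $H^\lambda_m(v)$ does give positivity at $x$, but may create new small positive values elsewhere, so the infimum over the support need not stay bounded away from $0$.

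\textbf{Part (c).} This is the main gap, and you visibly struggle with it. The attempt $H^\lambda_n(\varepsilon v)\ge (1-\delta)^{\cdots}\lambda^nK^n(\varepsilon v)\ge(1-\delta')\varepsilon v$ is strictly \emph{less} than $\varepsilon v$, so it proves nothing; the subsequent fallbacks are either vague or circular (invoking \eqref{eq:lambdaw3} is exactly what you are trying to prove). The paper's fix is short but essential: to show $\lambda_n\ge\lambda_w(x)$, take $\lambda>\lambda_n$ \emph{strictly}, choose $\lambda'\in[\lambda_n,\lambda)$ with $(\lambda')^nK^nv\ge v$ and $v(x)>0$, and set $1+\varepsilon=\lambda/\lambda'$. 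For $\delta'$ small, $\|\lambda K H^\lambda_{j}(\delta'v)\|_\infty\le\varepsilon$ for $j<n$, whence $H^\lambda(w)=\dfrac{\lambda Kw}{1+\lambda Kw}\ge\dfrac{\lambda}{1+\varepsilon}Kw$ along the iteration, and inductively $H^\lambda_n(\tilde\delta v)\ge(\lambda/(1+\varepsilon))^nK^n(\tilde\delta v)=(\lambda')^nK^n(\tilde\delta v)\ge\tilde\delta v$. Now Part~(b) gives $\lambda\ge\lambda_w(x)$. The point is that because you are matching two \emph{infima}, you may always assume a strict inequality and use the gap $\lambda/\lambda'$ to absorb the nonlinearity; trying to prove the pointwise equivalence $H^\lambda_n(v)\ge v\Leftrightarrow\lambda^nK^nv\ge v$ at a fixed $\lambda$ is both unnecessary and (as you discovered) false without extra room.
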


\begin{proof}
\begin{enumerate}[(a)]
\item
Let $X^\prime :=\{y \in X: x \to y\}$ and consider $v^\prime(y):=v(y) \ident_{X^\prime}(y)$, then
$\lambda K v^\prime \ge v^\prime/(1-v^\prime)$ (since $\lambda Kv^\prime(y)=\lambda Kv(y)$ for
all $y \in X^\prime$). Thus we may suppose, without loss of generality, that $X^\prime=X$.
For all $t \in [0,1]$ we have
$\lambda K(tv) \ge \frac{tv}{1-tv} \frac{1-tv}{1-v}$ and
$v \mapsto \frac{1-tv}{1-v}$ is nondecreasing.
By contradiction, suppose that $\inf_{y\in Y}v(y)=\delta>0$, hence
$\frac{1-tv}{1-v} \ge \frac{1-t\delta}{1-\delta} \mathbf 1$ and
$(\lambda\frac{1-\delta}{1-t\delta}) K(tv) \ge \frac{tv}{1-tv}$
thus, for all $t \in (0,1)$, $\lambda> \lambda\frac{1-\delta}{1-t\delta} \ge \lambda_w(x)$.
\item
Define $\lambda_n(x):=\inf \{\lambda \in \R: \exists v \in l^\infty_+(X), v(x) >  0 , H_n^\lambda(v) \ge v\}$.
Clearly $H^\lambda(v) \ge v$ implies $H_n^\lambda(v) \ge v$, thus $\lambda_w(x) \ge \lambda_n$.
If $\lambda >\lambda_n$ then by Corollary~\ref{cor:monotone}
the sequence $\{\widetilde v_i\}_{i\in\N}$ defined by $\widetilde v_0=\mathbf 1$,
$\widetilde v_{i+1}=H_n^\lambda(\widetilde v_i)$ converges monotonically to some $v > \mathbf 0$, namely
$\widetilde v_i \downarrow v$. But $\widetilde v_i=v_{ni}$ (for all $i \in \N$)
where the nonincreasing sequence $\{v_j\}_{j\in\N}$ is defined by
equation~\eqref{eq:vn}, whence $v_j \downarrow v$. By \eqref{eq:lambdaw},
since
$H^\lambda(v) =v$, we get $\lambda\ge\lambda_w(x)$.
\item
Define now $\lambda_n:=\inf \{\lambda \in \R: \exists v \in l^\infty_+(X), v(x)> 0 \text{ such that } \lambda^n K^n v \ge v \}$.
We prove that
$\lambda_{n} \le \lambda_w(x)$ for all $n\ge1$. Indeed, if $\lambda>\lambda_w(x)$,
then there exists $\tilde v$ such that $\lambda K\tilde v\ge\frac{\tilde v}{1-\tilde v}\ge\tilde v$.
By induction on $n$, $\lambda^nK^n\tilde v\ge\tilde v$,
thus, for all $n$, $\lambda\ge\lambda_n$ which implies $\lambda_n\le\lambda_w$.
On the other hand, if $\lambda >\lambda_n$ then there exists
$\lambda^\prime \in [\lambda_n, \lambda)$ such that
$(\lambda^\prime)^n K^n v \ge v$ for some $v \in l^\infty_+(X)$ such that $v(x)>0$.
If $\eps=\lambda/\lambda^\prime-1$
and $\delta>0$ is such that $\|\lambda K H^\lambda_{n-1}(\delta^\prime v)\|_\infty \le \eps$ for all $\delta^\prime \in (0, \delta]$
(which is possible since $H^\lambda_n$ is continuous and
$H^\lambda_n(\mathbf 0)=\mathbf 0$) then we have that
$H^\lambda_n(\delta^\prime v) \ge (\lambda/(1+\eps)) K H^\lambda_{n-1}(\delta^\prime v)$. By induction
on $n$ and since $K$ is a positive operator
there exists $\widetilde \delta>0$ such that
$H^\lambda_n(\widetilde \delta v) \ge (\lambda/(1+\eps))^n K^n H^\lambda_0(\widetilde \delta v)=(\lambda^\prime)^n K^n (\widetilde \delta v)
\ge \widetilde \delta v$
whence $\lambda \ge \lambda_w(x)$ by (b) and this implies $\lambda_n \ge \lambda_w(x)$.
\end{enumerate}
\end{proof}

\noindent The following theorem improves Lemma 3.2 of \cite{cf:BZ}.

\begin{teo}
\label{th:weak}
For every weighted graph $(X,K)$ we have that $\lambda_w(x) \ge 1/M^-_w(x)$.
\end{teo}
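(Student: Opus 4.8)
The plan is to show that $M^-_w(x) \ge 1/\lambda$ for every $\lambda > \lambda_w(x)$ and then let $\lambda \downarrow \lambda_w(x)$; this yields $M^-_w(x) \ge 1/\lambda_w(x)$, i.e.~$\lambda_w(x) \ge 1/M^-_w(x)$ (the inequality being trivial when $\lambda_w(x)=+\infty$, and observe that for $\lambda>\lambda_w(x)$ close to $\lambda_w(x)$ such a $\lambda$ indeed satisfies the condition in~\eqref{eq:lambdaw}, since that set is an up-set by monotonicity).

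So fix $\lambda > \lambda_w(x)$. First I would invoke the characterization~\eqref{eq:lambdaw} to obtain a \emph{single} $v \in l^\infty_+(X)$ with $v(x) > 0$ and $H^\lambda(v) \ge v$. Since every coordinate of $H^\lambda(v) = \lambda Kv/(\mathbf 1 + \lambda Kv)$ is strictly less than $1$, this $v$ satisfies $\mathbf 0 \le v < \mathbf 1$; rearranging the componentwise inequality $v \le \lambda Kv/(\mathbf 1 + \lambda Kv)$ then gives $\lambda Kv \ge v/(\mathbf 1 - v) \ge v$. Because $K$ is positive (hence monotone), an immediate induction yields $\lambda^n K^n v \ge v$ for all $n \ge 1$. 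Evaluating at $x$ and using $0 \le v \le \mathbf 1$,
\[
\lambda^n T_x^n = \lambda^n \sum_{y \in X} k^n_{xy} \ge \lambda^n \sum_{y \in X} k^n_{xy}\, v(y) = \lambda^n (K^n v)(x) \ge v(x) > 0,
\]
so $(T_x^n)^{1/n} \ge v(x)^{1/n}/\lambda$ for every $n$, and taking $\liminf_n$ (using $v(x) > 0$, so $v(x)^{1/n}\to 1$) gives $M^-_w(x) \ge 1/\lambda$, as wanted.

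This is really just the ``easy'' direction already appearing inside the proof of Theorem~\ref{th:equiv1}(c), combined with the trivial domination $(K^n v)(x) \le T_x^n$, so I do not expect a serious obstacle. The only point deserving care is that the solution $v$ must be taken \emph{independent of $n$}: the linear formulation in Theorem~\ref{th:equiv1}(c) only provides, for each $n$, a possibly different $v_n$ with $\lambda^n K^n v_n \ge v_n$, which after iterating would control $(T_x^k)^{1/k}$ only along multiples of a fixed $n$ and hence would not pin down the full $\liminf$; passing through the nonlinear inequality in~\eqref{eq:lambdaw} circumvents this.
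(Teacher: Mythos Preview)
Your argument is correct and essentially the same as the paper's. The paper argues the contrapositive---fixing $\lambda<1/M^-_w(x)$ and showing that any $v$ with $\lambda Kv\ge v/(1-v)$ must have $v(x)=0$, hence $\lambda\le\lambda_w(x)$---but the core computation is identical: from $\lambda Kv\ge v$ one iterates to $\lambda^n K^n v\ge v$, bounds $(K^n v)(x)$ by $T^n_x$ times the sup-norm of $v$ (you use $v\le\mathbf 1$, the paper uses $\|v\|_\infty$), and reads off the comparison between $\lambda$ and $M^-_w(x)$.
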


\begin{proof}
Let $\lambda<1/M^-_w(x)$.
If there exists $v\in l^\infty_+(X)$
such that $\lambda Kv \ge \frac{v}{1-v}\ge v$,
then for all $n\in\N$ we have $\lambda^n K^nv \ge v$.
Thus $\|v\|_\infty  \lambda^n \sum_{y \in X} k^n_{xy} \ge \lambda^n K^nv(x) \ge v(x)$,
but, since $\lambda \liminf_n \sqrt[n]{\sum_{y \in X} k^n_{xy}}<1$, we have that $\liminf_{n} \lambda^n \sum_{y \in X} k^n_{xy}=0$,
whence $v(x)=0$. By~\eqref{eq:lambdaw}, $\lambda\le\lambda_w(x)$.
\end{proof}

\begin{rem}
\label{finite}
Let us focus on the particular case where $X$ is finite. Clearly
if $K$ is irreducible, then
$\lambda_w=\lambda_s=1/M_w=1/M_s=1/M_w^-$
and these parameters do not depend
on the site of the initial particle.

If $X$ is finite, but $K$ is not irreducible,
it may happen that $\lambda_w(x)\neq\lambda_s(x)$
and also $\lambda_w(x)\neq\lambda_w(y)$
(although $\lambda_w(x)\leq\lambda_w(y)$ for all $y$ such that
$x\to y$).

Moreover in the finite case
$\lambda_w(x,[x])=1/M_w^-(x,[x])$ (where by adding $[x]$ we
consider the parameters corresponding to the process
restricted to $[x]$, namely $([x],K|_{[x]\times [x]})$):
the proof is the same as in Proposition 2.2
of \cite{cf:BZ}.
From this and the fact that the BRW starting from
one particle in $x$ survives globally if and only if it survives  (locally and globally) in at least one irreducible class,
it follows that $
 \lambda(x,X)=\min\{1/M_w^-(y,[y]):x \to y \}$.
By induction on the number of equivalence classes, it is not difficult to
prove that $M_w^-(x,X)= \max\{M_w^-(y,[y]):x \to y \}$
which proves, for finite weighted graphs, that
$\lambda_w(x,X)=1/M_w^-(x)$.

As for the critical behavior,
 the $\lambda_w(x)$-BRW dies out (globally, thus locally)
almost surely. Indeed if
$\lambda_w(x)<\lambda_w(y,[y])=\lambda_s(y)$ it cannot
survive confined to $[y]$. If $\lambda_w(x)=\lambda_w(y,[y])$ then
according to \textit{(a)} of Theorem~\ref{th:equiv1} the
probabilities of survival $v$ for the process confined to $[y]$
satisfy $\inf_{z \in [y]} v(z)=0$. Being $[y]$ finite and irreducible,
this means that $v(z)=0$ for all $z \in [y]$.
\end{rem}

We say that $(X,K)$ is \textit{locally isomorphic} to $(Y,\widetilde K)$ if and only if
there exists a surjective map $f:X \to Y$ such that
$\sum_{z \in f^{-1}(y)} k_{xz}=\widetilde k_{f(x)y}$
for all
$x \in X$ and $y \in Y$.
An $(X,K)$ which is locally isomorphic to some
$(Y,\widetilde K)$ ``inherits'' its $M_w^-$s and $\lambda_w$s
(in a sense which is clear in the proof of the following
proposition).
\begin{pro}\label{th:fgraph}
If $Y$ is a finite set and $(X,K)$ is locally
isomorphic to $(Y, \widetilde K)$
then $\lambda_w(x)=1/M_w^-(x)$.
\end{pro}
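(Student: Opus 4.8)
The plan is to reduce the statement to the finite case by pushing everything through the surjection $f\colon X\to Y$ that realizes the local isomorphism; the engine of the proof is the observation that pullback along $f$ intertwines $K$ with $\widetilde K$.

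First I would record the basic identity: for any bounded $v\colon Y\to\R$, setting $\tilde v:=v\circ f$, grouping $\sum_{z\in X}k_{xz}v(f(z))$ according to the fibers $f^{-1}(y)$ and using $\sum_{z\in f^{-1}(y)}k_{xz}=\widetilde k_{f(x)y}$ gives $K\tilde v=(\widetilde K v)\circ f$ (the rearrangement is legitimate since $\sum_z k_{xz}\le M<\infty$). Iterating this with $v=\mathbf 1$ (note $\mathbf 1_X=\mathbf 1_Y\circ f$) yields $T^n_x=(\widetilde K^n\mathbf 1_Y)(f(x))$, i.e.\ the $n$-step total weight from $x$ in $(X,K)$ equals that from $f(x)$ in $(Y,\widetilde K)$; hence $M^-_w(x;X)=M^-_w(f(x);Y)$. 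Since $Y$ is finite, Remark~\ref{finite} gives $\lambda_w(f(x);Y)=1/M^-_w(f(x);Y)=1/M^-_w(x;X)$. By Theorem~\ref{th:weak} we already know $\lambda_w(x;X)\ge 1/M^-_w(x;X)$, so the whole problem reduces to proving the reverse inequality $\lambda_w(x;X)\le\lambda_w(f(x);Y)$.

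For that inequality I would take any $\lambda>\lambda_w(f(x);Y)$, apply the characterization~\eqref{eq:lambdaw} on $(Y,\widetilde K)$ to obtain $v\in l^\infty_+(Y)$ with $v(f(x))>0$ and $H^\lambda_Y(v)\ge v$, and then check that $\tilde v=v\circ f$ does the job on $X$: from $K\tilde v=(\widetilde K v)\circ f$ one reads off $H^\lambda(\tilde v)=H^\lambda_Y(v)\circ f\ge v\circ f=\tilde v$ pointwise, while $\tilde v(x)=v(f(x))>0$ and $\tilde v\in l^\infty_+(X)$, so~\eqref{eq:lambdaw} forces $\lambda\ge\lambda_w(x;X)$; taking the infimum over such $\lambda$ concludes.

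I do not expect a genuine obstacle here: the argument is essentially the bookkeeping identity $K(v\circ f)=(\widetilde Kv)\circ f$ together with the already-established finite case. The only points needing a line of care are that $(Y,\widetilde K)$ satisfies the standing hypotheses of the framework (finite row sums, which is automatic since $Y$ is finite and $\widetilde k_{f(x)y}\le\sum_{z\in X}k_{xz}\le M$), and that the pullback $v\circ f$ lands in the correct domain with the right positivity at $x$ — both immediate. If one prefers not to cite Remark~\ref{finite} as a black box, one can instead feed into the pullback step the explicit finite-dimensional witness $v$ produced directly by the $M^-_w$-analysis on $Y$, but routing through Remark~\ref{finite} is the cleanest.
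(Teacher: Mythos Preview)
Your proof is correct and follows essentially the same approach as the paper: both arguments rest on the intertwining identity $K(v\circ f)=(\widetilde K v)\circ f$, use it to match $M_w^-$ on $X$ and $Y$ and to pull back a witness for $\lambda_w$, and then invoke Remark~\ref{finite} for the finite case. The only cosmetic differences are that the paper pulls back along the linear characterization $\lambda\widetilde K\widetilde v\ge\widetilde v$ of Theorem~\ref{th:equiv1}(c) rather than the nonlinear $H^\lambda$ form, and closes the chain by asserting the reverse inequality $\lambda_w(x;X)\ge\lambda_w(f(x);Y)$ directly instead of routing through Theorem~\ref{th:weak} as you do.
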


\begin{proof}
The definition of the map $f$ immediately implies that $\sum_{z \in X} k^n_{xz}
= \sum_{y \in Y} \widetilde k^n_{f(x)y}$, hence $M_w^-(x,X)=
 M_w^-(f(x),Y)$. Moreover
$\lambda_w(x,X)= \lambda_w(f(x),Y)$. Indeed it is easy to prove that
$\lambda_w(x,X) \ge \lambda_w(f(x),Y)$. Conversely, if $\lambda>\lambda_w(f(x),Y)$
and $\widetilde v$ is such that $\lambda \widetilde K \widetilde v \ge \widetilde v$
then we define $v(x):=\widetilde v(f(x))$. Clearly $\widetilde K \widetilde v (f(x))=
Kv(x)$ hence $\lambda \ge \lambda_w(x,X)$. Remark~\ref{finite} yields the conclusion.
\end{proof}
Examples of BRWs $(X,K)$ which are locally isomorphic to some finite $(Y,\widetilde K)$ are BRWs where $\sum_{z\in X}k_{xz}$
does not depend on $x$ (in this case $Y=\{y\}$ is a singleton and
$\widetilde k_{yy}=\sum_{z\in X}k_{xz}$).
Another example is given
by \textit{quasi-transitive} BRWs, that is, there exists a finite $X_0\subset X$ such that for any $x\in X$ there is
a bijective map $\gamma_x:X\to X$ satisfying $\gamma_x^{-1}(x)
\in X_0$ and $k_{yz}=k_{\gamma_x y\,\gamma_x z}$ for all
$y,z$ (in this case $Y=X_0$ and
$\widetilde k_{wz}=\sum_{y:y=\gamma_y(z)}k_{wy}$).

Let us consider now the irreducible case;
since $\lambda_w(x)$ and $M_w^-(x)$ do not depend on $x$ let us write $\lambda_w$ and $M_w^-$ instead.
Note that the characterization of $\lambda_w$ can be written as
\begin{equation}
\label{eq:lambdaw2}
\begin{split}
\lambda_w&=\inf \{\lambda \in \R: \exists v \in l^\infty(X), v >  \mathbf 0 , H^\lambda(v) \ge v\} \\
&=
\inf \Big \{\lambda \in \R: \exists v \in l^\infty(X), v> \mathbf 0 , \lambda^n K^nv \ge v \Big \}\\
&=\inf \{\lambda \in \R: \exists v \in l^\infty(X), v >  \mathbf0 , H^\lambda_n(v) \ge v\},
\end{split}
\end{equation}
where the requirement $v >  \mathbf0$ seems less restrictive
than $v(x)>0$ for all $x\in X$, which is the one we would
expect in view of equation~\eqref{eq:lambdaw}. Nevertheless
by Remark ~\ref{rem:irrid} it follows that if there exists
$v >  \mathbf0$ satisfying one of the inequalities in
\eqref{eq:lambdaw}, then there exists a solution
$v^\prime$ of the same inequality with $v^\prime(x)>0$ for all $x\in X$.

\begin{pro}\label{th:condU}
Let $(X,K)$ be an irreducible weighted graph.
If for all $\eps>0$ there exists $N$ such
that $\sum_{y \in X} k^{N}_{xy} \ge (M_w^--\eps)^{N}$,
 for all $x\in X$, then $\lambda_w =1/M_w^-$.
\end{pro}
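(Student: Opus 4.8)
The plan is to combine Theorem~\ref{th:weak}, which already gives $\lambda_w \ge 1/M_w^-$, with a construction of an explicit subsolution $v > \mathbf 0$ of the linear inequality $\lambda^N K^N v \ge v$ for every $\lambda > 1/M_w^-$, thereby forcing $\lambda_w \le 1/M_w^-$ via the characterization~\eqref{eq:lambdaw2}. Fix $\lambda > 1/M_w^-$ and choose $\eps > 0$ small enough that $\lambda (M_w^- - \eps) > 1$. By hypothesis there is an $N$ with $T_x^N = \sum_{y \in X} k_{xy}^N \ge (M_w^- - \eps)^N$ for \emph{all} $x \in X$; set $\mu := \lambda^N (M_w^- - \eps)^N > 1$. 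First I would observe that $K^N$ is itself an irreducible (in fact just: nonnegative, bounded) operator on $l^\infty(X)$ whose row sums are all $\ge (M_w^- - \eps)^N$, i.e.\ $K^N \mathbf 1 \ge (M_w^- - \eps)^N \mathbf 1$. Applying $K^N$ repeatedly and using monotonicity of $K^N$ gives $(K^N)^j \mathbf 1 \ge (M_w^- - \eps)^{Nj} \mathbf 1$ for all $j$, so $\mathbf 1$ is a subsolution of $\lambda^N K^N v \ge v$ already — indeed $\lambda^N K^N \mathbf 1 \ge \mu \mathbf 1 \ge \mathbf 1$. But $\mathbf 1 > \mathbf 0$, so by the second line of~\eqref{eq:lambdaw2} (with $n = N$) we conclude $\lambda \ge \lambda_w$. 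Letting $\lambda \downarrow 1/M_w^-$ yields $\lambda_w \le 1/M_w^-$, and combined with Theorem~\ref{th:weak} this gives equality.

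The one subtlety is that~\eqref{eq:lambdaw2} is stated for the inequality $\lambda^n K^n v \ge v$ with a single fixed $n$, and I am using $n = N$ which depends on $\lambda$ (through $\eps$); but this is harmless, since~\eqref{eq:lambdaw2} holds for \emph{every} $n \ge 1$, so for each admissible $\lambda$ I simply invoke it with the corresponding $n = N$. A second point to check: I must make sure $\mathbf 1 \in l^\infty(X)$ qualifies as ``$v > \mathbf 0$''; it does, trivially. Thus there is essentially no obstacle here — once the hypothesis is in hand, the subsolution $v \equiv \mathbf 1$ works directly, and the only work is bookkeeping the quantifiers on $\eps$, $N$, and $\lambda$.

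If one prefers to avoid even the mild $\lambda$-dependence of $N$, an alternative is to fix $\eps$ once, take the corresponding $N$, and show directly that $1/\limsup_j (T_x^{Nj})^{1/Nj} \le 1/(M_w^- - \eps)$, then let $\eps \to 0$; but the route above is cleaner. The main thing to be careful about is simply that the hypothesis is a \emph{uniform} lower bound over all $x$ — it is exactly this uniformity that lets the constant function $\mathbf 1$ serve as the subsolution, whereas in general $M_w^-(x) = \liminf_n (T_x^n)^{1/n}$ need not be attained along a common subsequence of $n$ for all sites simultaneously, which is precisely the gap that prevents $\lambda_w = 1/M_w^-$ from being a theorem without extra assumptions.
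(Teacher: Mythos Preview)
Your proof is correct and essentially identical to the paper's: both fix $\lambda>1/M_w^-$, pick $\eps$ with $\lambda(M_w^- - \eps)>1$, use the hypothesis to get $N$ with $\lambda^N K^N \mathbf 1 \ge (\lambda(M_w^--\eps))^N \mathbf 1 \ge \mathbf 1$, invoke Theorem~\ref{th:equiv1} (equivalently~\eqref{eq:lambdaw2}) with $v=\mathbf 1$ and $n=N$, and finish with Theorem~\ref{th:weak}. Your extra remarks on the $\lambda$-dependence of $N$ and on the role of uniformity are sound and simply make explicit what the paper leaves implicit.
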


\begin{proof}
Let $\lambda>1/M_w^-$. Choose $\eps$ such that
$\lambda(M_w^--\eps)>1$. Then $\lambda^NK^N\mathbf1(x)=\lambda^N
\sum_{y\in X}k^N_{xy}\ge (\lambda(M_w^--\eps))^N>1$. Hence by
Theorem~\ref{th:equiv1} $\lambda>\lambda_w$.
Theorem~\ref{th:weak} yields the conclusion.
\end{proof}
We note that if $(X,K)$ is irreducible and satisfies the hypothesis of Proposition~\ref{th:fgraph},
then Proposition~\ref{th:condU} provides an alternative proof of $\lambda_w=1/M_w^-$.
For an example (which is not locally isomorphic to a finite weighted graph),
where one can use Proposition~\ref{th:condU}, see Example 3 in \cite{cf:BZ}
(which is a BRW on a particular radial tree).

\subsection{The critical behavior}
\label{subsec:criticalb}

\begin{teo}\label{th:critb}
For each weighted graph $(X,K)$ if
$\lambda=\lambda_s(x)$ then the $\lambda$-BRW starting from one particle at $x \in X$ dies out locally almost surely.
\end{teo}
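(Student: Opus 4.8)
The plan is to return to the Galton--Watson branching process associated with the BRW in the proof of Theorem~\ref{th:pemantleimproved} and to show that at $\lambda=\lambda_s(x)$ it is (sub)critical and non-degenerate, hence almost surely extinct.

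Recall from that proof that, after disregarding the original time scale, strong survival of the $\lambda$-BRW started from one particle at $x$ is equivalent to the survival of the Galton--Watson process in which a particle corresponds to a trail through $x$ with $n$ cycles and its children are the particles whose trails prolong it by exactly one cycle. The mean number of offspring of this process equals $\sum_{n\ge1}\phi^n_{xx}\lambda^n=\Phi(x,x|\lambda)$. By~\eqref{eq:genfun1} we have $1/M_s(x)=\max\{\lambda\ge0:\Phi(x,x|\lambda)\le1\}$, so at $\lambda=\lambda_s(x)=1/M_s(x)$ (see Theorem~\ref{th:pemantleimproved}) we get $\Phi(x,x|\lambda_s(x))\le 1$; in particular the offspring mean is finite and at most $1$.

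Next I would observe that the offspring law of this branching process is not the unit mass at $1$. We may assume $\sum_{y\in X}k_{xy}>0$, since otherwise the particle at $x$ has no children at all and local extinction is immediate. In that case the particle at $x$ has an $\mathrm{Exp}(1)$ lifetime and breeds at total rate $\lambda\sum_{y\in X} k_{xy}$, so with probability $\bigl(1+\lambda\sum_{y\in X} k_{xy}\bigr)^{-1}>0$ it produces no offspring at all in the BRW, hence no child in the branching process; thus the offspring distribution assigns positive mass to $0$ and in particular is not concentrated at $1$.

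Finally, a Galton--Watson process whose offspring distribution has finite mean at most $1$ and is not the unit mass at $1$ is almost surely extinct: its offspring generating function $g$ satisfies $g(s)>s$ for all $s\in[0,1)$ (it is convex with $g(1)=1$ and $g'(1)\le1$, and non-degeneracy rules out $g(s)\equiv s$), so its smallest fixed point on $[0,1]$ equals $1$. Consequently the branching process dies out almost surely, and by the equivalence recalled above the $\lambda_s(x)$-BRW started at $x$ has, almost surely, eventually no particle at $x$; that is, it dies out locally almost surely. The only delicate point is the control of $\Phi$ exactly at the endpoint $\lambda=\lambda_s(x)$ — which is precisely what the identity $1/M_s(x)=\max\{\lambda\ge0:\Phi(x,x|\lambda)\le1\}$ supplies — together with the classical fact that a critical but non-degenerate Galton--Watson process becomes extinct almost surely.
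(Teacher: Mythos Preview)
Your proof is correct and follows essentially the same approach as the paper: both reduce to the Galton--Watson process from Theorem~\ref{th:pemantleimproved}, use the identity $1/M_s(x)=\max\{\lambda\ge0:\Phi(x,x|\lambda)\le1\}$ to conclude that its offspring mean is at most $1$ at $\lambda=\lambda_s(x)$, and invoke the classical extinction criterion. You have simply made explicit the non-degeneracy of the offspring law (positive probability of no children), which the paper leaves implicit.
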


\begin{proof}
Recall that (see the proof of Theorem~\ref{th:pemantleimproved})
the $\lambda(x)$-BRW survives locally if and only if the Galton-Watson branching process
with expected number of children $\Phi(x,x|\lambda)$ does. Since
$\Phi(x,x|1/M_s)\leq 1$ and $\lambda_s(x)=1/M_s$ then there is
a.s.~local extinction at $\lambda_s(x)$.
\end{proof}

\begin{teo}\label{th:critg}
If $Y$ is a finite set and $(X,K)$ is locally isomorphic to $(Y, \widetilde K)$
then the $\lambda_w(x)$-BRW starting from one particle at $x \in X$ dies out
globally almost surely.
\end{teo}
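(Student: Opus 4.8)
The plan is to reduce to the finite case already established in Remark~\ref{finite} by pushing the BRW on $(X,K)$ forward along $f$ to a BRW on $(Y,\widetilde K)$.

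First recall from Proposition~\ref{th:fgraph} that $\lambda_w(x,X)=\lambda_w(f(x),Y)$; write $\lambda$ for this common value. Note also that $(Y,\widetilde K)$ is a genuine weighted graph: given $y\in Y$ pick $x$ with $f(x)=y$ (possible since $f$ is surjective), so that $\sum_{w\in Y}\widetilde k_{yw}=\sum_{w\in Y}\sum_{z\in f^{-1}(w)}k_{xz}=\sum_{z\in X}k_{xz}\le M<\infty$.

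The core step is to couple the $\lambda$-BRW on $(X,K)$ started from one particle at $x$ with a $\lambda$-BRW on $(Y,\widetilde K)$ started from one particle at $f(x)$ so that the latter is obtained from the former simply by relabeling the position $z$ of each particle as $f(z)$. To see that the relabeled process has the law of the $\lambda$-BRW on $(Y,\widetilde K)$, realize the $X$-BRW through independent unit-rate lifespan clocks and, for each particle at $z$ and each $y\in X$, an independent Poisson breeding clock of rate $\lambda k_{zy}$; for fixed $z$ and $w\in Y$ the superposition of the clocks with $y\in f^{-1}(w)$ is, by the superposition theorem for Poisson processes, a Poisson clock of rate $\lambda\sum_{y\in f^{-1}(w)}k_{zy}=\lambda\widetilde k_{f(z)w}$, these superposed clocks being independent over $w$ and over particles, and a ring of such a clock places a new particle at $w$; the lifespans are untouched. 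Thus the relabeling pushes the $X$-BRW forward to the $\lambda$-BRW on $(Y,\widetilde K)$ from $f(x)$, and since $f$ is a function it maps each $X$-particle to exactly one $Y$-particle, so the total number of particles alive at time $t$ is the same in both processes for every $t$.

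Hence the $\lambda$-BRW on $(X,K)$ survives globally if and only if its image, the $\lambda$-BRW on $(Y,\widetilde K)$, survives globally, and the two events have the same probability. Because $Y$ is finite and $\lambda=\lambda_w(f(x),Y)$, Remark~\ref{finite} gives that the $\lambda$-BRW on $(Y,\widetilde K)$ dies out globally almost surely; therefore so does the $\lambda_w(x)$-BRW on $(X,K)$ started from one particle at $x$. The main point requiring care is the coupling in the core step — the explicit construction of the $X$-BRW from independent clocks and the verification, via the Poisson superposition theorem, that its $f$-pushforward is distributed as the $\lambda$-BRW on $(Y,\widetilde K)$; the identity $\sum_{z\in f^{-1}(w)}k_{xz}=\widetilde k_{f(x)w}$ is exactly what makes the rates match.
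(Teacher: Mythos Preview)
Your proof is correct and follows essentially the same route as the paper: both argue that the $\lambda_w(x)$-BRW on $X$ dies out globally if and only if the $\lambda_w(f(x))$-BRW on $Y$ does, and then invoke Remark~\ref{finite} on the finite graph. The paper delegates the coupling step to Proposition~3.7 of \cite{cf:BZ}, whereas you spell it out explicitly via Poisson superposition; your added care in checking $\lambda_w(x,X)=\lambda_w(f(x),Y)$ from Proposition~\ref{th:fgraph} and verifying that $(Y,\widetilde K)$ satisfies the weighted-graph hypothesis is welcome detail.
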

\begin{proof}
By reasoning as in Proposition~3.7 of \cite{cf:BZ} it is clear that the $\lambda_w(x)$-BRW
on $X$ dies out if and only if the $\lambda_w(x)$-BRW
on $Y$ does.
Remark~\ref{finite} yields the conclusion.
\end{proof}

\section{Examples}
\label{sec:examples}

We start by giving an example of an irreducible IBP
where, although the expected number of children of each particle is less than 1,
nevertheless the colony survives with positive probability.

\begin{exm}
\label{exm:1}
Let $X=\N$, $\{p_n\}_{n\in\N}$ be a sequence in $[0,1)$ and suppose that a particle of type $n\ge1$ at the end of its life
has one child of type $n+1$ with
probability $1-p_n$, one child of type $n-1$ with probability $p_n/2$
(if $n=0$ then it has one child of type $0$ with probability $p_0/2$)
and no children with probability $p_n/2$.
The generating function $G$ can be explicitly computed
\[
G(z|n)=
\begin{cases}
\frac{p_n}{2} + \frac{p_n}{2} z(n-1)+ (1-p_n) z(n+1) & n \ge 1 \\
\frac{p_0}{2} + \frac{p_0}{2} z(0) + (1-p_0) z(1) & n=0.\\
\end{cases}
\]
This process clearly dominates the (reducible) one where
a particle of type $n$ at the end of its life
has one child of type $n+1$ with
probability $1-p_n$ and no children with probability $p_n$.
The latter process has generating function
$\widetilde G(z|n)=p_n + (1-p_n)z(n+1) $.
By induction it is easy to show that the probabilities of extinction before or at generation
$n$ of the second process
are $q_{n}(j)=1-\prod_{i=j}^{j+n-1} (1-p_i)$ for all $n \ge 1$; hence it survives with positive probability,
that is $q_n(0) \not\to 1$ as $n \to \infty$, if
and only if $\sum_{i=1}^\infty p_i < +\infty$.
\end{exm}

In all the following examples, $X=\N$ and $k_{ij}=0$ whenever $|i-j|>1$. Although
this looks quite restrictive, one quickly realizes that many BRWs are
locally isomorphic to BRWs of this kind. For instance,
every BRW on a homogeneous tree of degree $m$ (with $k_{ij}=1$ on each edge)
is locally isomorphic to the BRW on $\N$ with $k_{0\,1}:=m$, $k_{n\, n+1}:=m-1$,
$k_{n\, n-1}:=1$ and $0$ otherwise. More generally any \textit{radial BRW on a radial tree} is
locally isomorphic to a BRW on $\N$. Indeed a general radial BRW on a radial tree is constructed
as follows: let us consider two positive real sequences $\{k_n^+\}_{n\in\N}$,
$\{k_n^-\}_{n\in\N}$ and a positive integer valued sequence $\{a_n\}_{n\in\N}$. By construction, the root
of the tree is some vertex $o$ which has $a_0$ neighbors and the rates are $k_{o x}:=k_0^+$,
$k_{x o}:=k_0^-$ for all neighbors $x$. Each vertex $x$ at distance $1$ from $o$ has
$1+a_1$ neighbors (one is $o$) and
we set $k_{x y}:=k^+_1$ and $k_{y x}:=k^-_1$ for all its $a_1$ neighbors $y$ at distance $2$
from $o$. Now each vertex at distance $2$ from $o$ has $1+a_2$ neighbors,
an outward rate $k_2^+$ and an inward rate $k_2^-$ and so on. This BRW is clearly locally
isomorphic to (therefore it has the same global behavior of) the BRW on $\N$ with
$k_{n\, n+1}:=a_n k_n^+$, $k_{n+1\, n}:= k_n^-$ and $0$ otherwise.

The next one is an example of a BRW  on $\N$ which is not irreducible and where $\lambda_w>1/M_w$.
This answers
an open question raised in \cite{cf:BZ}.

\begin{exm}
\label{exm:2}
Let $\{k_n\}_{n \in \N}$ be a bounded sequence of positive real numbers and
let us consider the BRW on
$\N$ with rates $k_{ij}:=k_i$ if $i=j-1$ and $0$ otherwise.
By using Equations~\eqref{eq:vn} and~\eqref{eq:H-BRW} one can show
that $v_n(i)=\lambda^n \beta_{i+n} /(1+\sum_{r=1}^{n} \lambda^r
\beta_{i+n}/\beta_{i+n-r})$ where $\beta_n:=\prod_{i=0}^{n-1} k_i$.

In order to prove that
$\lambda_w(i)=1/\liminf_n \sqrt[n]{\beta_{n+i}/\beta_i}=1/M_w^-(i)$ (which
 does not depend on $i$, though the BRW is not irreducible)
one may either study the behavior of $\{v_n\}_{n\in\N}$ above or, which is
simpler, use Theorem~\ref{th:equiv1}.
Indeed, without loss of generality,
we just need to prove that
for all $\lambda > 1/\liminf_n \sqrt[n]{\beta_n}$ it is
possible to solve the inequality
$\lambda K v \ge v$ for some $v \in l^\infty(X)$, $v> \mathbf 0$.
One can easily check that $ v(n):=1/(\lambda^n \beta_n)$ is a solution;
since $\lambda > 1/\liminf_n \sqrt[n]{\beta_n}$ we have that
$\lim_n v(n) =0$ and then $v \in l^\infty(X)$
.

Note that $\lambda_w=1/M_w^-$ which may be different from $1/M_w$
with the following choice of the rates.
Our goal is to define big intervals of consecutive vertices where $k_{i\, i+1}=1$, followed
by bigger intervals of vertices where $k_{i\, i+1}=2$ and so on. The result is a BRW where
$M_w=\limsup_n \sqrt[n]{\beta_n}=2$ while
$M_w^- = \liminf_n \sqrt[n] \beta_n=1$.

Define $a_n:=\lceil \log 2/\log (1+1/n) \rceil$, $b_n:=\lceil \log 2/(\log 2 - \log (2-1/n)) \rceil$ and
$\{c_n\}_{n\ge1}$ recursively by $c_1=1$, $c_{2r}=a_{2r}c_{2r-1}$, $c_{2r+1}=b_{2r+1}c_{2r}$ (for all $r \ge 1$).
Let $k_{i }$ be equal to $1$ if $i \in (c_{2r-1}, c_{2r}]$ (for some $r \in \N$) and
equal to $2$ if $ i \in (c_{2r}, c_{2r+1}]$ (for some $r \in \N$).
Clearly $\sqrt[n]{\beta_n} \in [1,2]$ for all $n \in \N$ and it is easy to check that, for all $r \ge 1$,
$\sqrt[c_{2r+1}]{\beta_{c_{2r+1}}} > 2-1/(2r+1)$ and $\sqrt[c_{2r}]{\beta_{c_{2r}}} \le 1+1/(2r)$, whence
$2=\limsup_n \sqrt[n] \beta_n>\liminf_n \sqrt[n] \beta_n=1$.
\end{exm}

Although this BRW is not irreducible, it is clear that a slight modification
(that is, adding a small backward rate as in the following example) does not
modify significantly the behavior of the process and allows to construct an irreducible
example with the same property.
Finally, the last example shows that the weak critical
survival is possible (while, according to Theorem~\ref{th:critb}, any strong critical BRW
dies out locally).

\begin{exm}
\label{exm:4}
Let $X:=\N$ and $K$ be defined by
$k_{0\, 1}:=2$, $k_{n\, n+1}:=(1+1/n)^2$, $k_{n+1\, n}:= 1/3^{n+1}$ and $0$ otherwise.
Hence the inequality $\lambda Kv \ge v/(1-v)$ becomes
\[
\begin{cases}
2 \lambda v(1) \ge v(0)/(1-v(0)) \\
\lambda(v(n+1) (1+1/n)^2+v(n-1)/3^n) \ge v(n)/(1-v(n)).\\
\end{cases}
\]
Clearly $v(0)=1/2$ and $v(n):=1/(n+1)$ (for all $n \ge 1$) is a solution for all
$\lambda \ge 1$.
If $\lambda <1$ then one can prove by induction that a solution must satisfy
$v(n+1)/v(n) \ge \frac1\lambda\left(\frac{n}{n+1}\right)^2\left(
1-\frac{1}{2^n}\right)$ for all $n \ge 2$. Thus
$v(n+1)/v(n)$ is eventually larger than $1+\eps$ for some $\eps>0$,
hence either $v=\mathbf 0$ or $\lim_n v(n)= +\infty$.
This implies that $\lambda_w=1$ and there is global survival if $\lambda=\lambda_w$.

\end{exm}

\end{document}